\DeclareMathOperator*{\holim}{holim} 
\DeclareMathOperator*{\hsect}{hsect} 
\DeclareMathOperator*{\psect}{psect}
\DeclareMathOperator*{\Tot}{Tot}
\DeclareMathOperator{\Sing}{\textrm{Sing}_*}
\begin{document}

\pretolerance=5000 

\bibliographystyle{../hsiam}

\setcounter{tocdepth}{1} 
\setlength{\parindent} {0pt}

\newcommand{\cat}[1]{\mathscr{#1}} 
\newcommand{\mcat}[1]{$\mathscr{#1}$} 
\newcommand{\hocat}[1]{Ho\mathscr{#1}} 

\newcommand{\ob}{\textrm{Ob}}
\newcommand{\mor}{\textrm{Mor}}
\newcommand{\id}{\mathbf 1} 

\newcommand{\sSet}{\mathbf{sSet}}
\newcommand{\Alg}{\mathbf{Alg}}
\newcommand{\sPr}{\mathbf{sPr}}
\newcommand{\dgCat}{\mathbf{dgCat}} 
\newcommand{\dgAlg}{\mathbf{dgAlg}} 
\newcommand{\Vect}{\mathbf{Vect}} 
\newcommand{\Ch}{\mathbf{Ch}}
\newcommand{\Chd}{{\mathbf{Ch}_{dg}}} 
\newcommand{\Chdp}{{\mathbf{Ch}_{pe}}} 
\newcommand{\Chp}{{\mathbf{Ch}_{pe}}} 
\newcommand{\uChp}{{\underline {\mathbf{Ch}}{}_{pe}}} 
\newcommand{\skMod}{\mathbf{skMod}} 
\newcommand{\Cat}{\mathbf{Cat}} 
\newcommand{\sCat}{\mathbf{sCat}}
\newcommand{\sModCat}{\mathbf{sModCat}}
\newcommand{\qCat}{\mathbf{qCat}}
\newcommand{\SeCat}{\mathbf{SeCat}}
\newcommand{\staCat}{\mathbf{Cat}^{Ex}_\oo}

\newcommand{\Hom}{\mbox{Hom}}
\newcommand{\uHom}{\mbox{\underline{Hom}}}
\newcommand{\cHom}{\mbox{Hom}^\bullet}

\newcommand{\Aut}{\mbox{Aut}}
\newcommand{\Out}{\mbox{Out}}
\newcommand{\End}{\mbox{End}}

\newcommand{\mods}{\textrm{-Mod}}
\newcommand{\Map}{\mbox{Map}}
\newcommand{\map}{\mbox{map}}
\newcommand{\Tor}{\mbox{Tor}}
\newcommand{\Ext}{\mbox{Ext}}
\newcommand{\Kos}{\mbox{Kos}} 
\newcommand{\Spec}{\mbox{Spec}}

\newcommand{\G}{\mathbb{G}} 

\newcommand{\set}[1]{\mathbb{#1}}
\newcommand{\Q}{\mathbb{Q}}
\newcommand{\C}{\mathbb{C}}
\newcommand{\Z}{\mathbb{Z}}
\newcommand{\R}{\mathbb{R}}

\newcommand{\PR}{\mathbb{P}}
\newcommand{\OO}{\mathscr{O}} 
\newcommand{\MM}{\mathscr{M}} 
\newcommand{\A}{\mathscr{A}} 
\newcommand{\B}{\mathscr{B}} 

\newcommand{\De}{\Delta}
\newcommand{\Ga}{\Gamma}
\newcommand{\Om}{\Omega}
\newcommand{\ep}{\epsilon}
\newcommand{\de}{\delta}
\newcommand{\La}{\Lambda}
\newcommand{\la}{\lambda}
\newcommand{\al}{\alpha}
\newcommand{\om}{\omega}

\newcommand{\ra}{\rightarrow}
\newcommand{\we}{\tilde \ra} 
\newcommand{\cof}{\hookrightarrow}
\newcommand{\fib}{\twoheadrightarrow}
\newcommand{\acof}{\tilde \hookrightarrow}
\newcommand{\afib}{\tilde \twoheadrightarrow}

\newcommand{\oo}{\infty}
\newcommand{\di}{\mbox{d}} 

\newcommand{\op}{^{\textrm{op}}} 
\newcommand{\inv}{^{-1}} 
\newcommand{\nneg}{\tau_{\geq 0}} 

\newcommand{\sgn}{\textrm{sgn }} 
\newcommand{\Bold}{\boldsymbol}
\newcommand{\IF}{\textrm{if }}
\newcommand{\Res}{\textrm{Res}}
\newcommand{\morita}{R\Gamma_\textrm{Morita}(X, \underline k)}
\newcommand{\comment}[1]{}

\theoremstyle{plain}
\newtheorem{thm}{Theorem}
\newtheorem{cor}[thm]{Corollary}
\newtheorem{lemma}[thm]{Lemma}
\newtheorem{propn}[thm]{Proposition}
\newtheorem{conj}{Conjecture}

\makeatletter
\newtheorem*{rep@theorem}{\rep@title}
\newcommand{\newreptheorem}[2]{
\newenvironment{rep#1}[1]{
 \def\rep@title{#2 \ref{##1}}
 \begin{rep@theorem}}
 {\end{rep@theorem}}}
\makeatother

\newtheorem{theorem}{Theorem}
\newreptheorem{theorem}{Theorem}

\theoremstyle{definition}
\newtheorem*{defn}{Definition}
\newtheorem*{altdef}{Alternative Definition}
\newtheorem{eg}{Example}
\newtheorem*{conv}{Convention}
\newtheorem*{notation}{Notation}
\newtheorem*{fact}{Construction}
\newtheorem{qn}{Question}
\newtheorem{fqn}{Further Question}

\theoremstyle{remark}
\newtheorem{rk}{Remark}
\newtheorem{trk}{Temporary Remark}
\newenvironment{frk}
{\pushQED {\qed} \renewcommand {\qedsymbol}{$///$} \trk}
{\popQED \endtrk}
\newtheorem*{claim}{Claim}

\title{Morita Cohomology and Homotopy Locally Constant Sheaves}
\author{Julian V. S. Holstein}
\address{Christ's College and University of Cambridge}
\email{J.V.S.Holstein@dpmms.cam.ac.uk}

\begin{abstract}
We identify Morita cohomology, which is a categorification of the cohmology of a topological space $X$, with the category of homotopy locally constant sheaves of perfect complexes on $X$.
\end{abstract}

\maketitle

\section{Introduction}

In \cite{HolsteinA} \emph{Morita cohomology} $\cat H^M(X)$ was defined as a categorification of \v Cech or singular cohomology of a topological space $X$ with coefficients in a commutative ring $k$. In this paper we make the construction more explicit by identifying Morita cohomology with the category of homotopy locally constant sheaves of perfect chain complexes over $k$ on $X$. 

\begin{reptheorem}{thm-morita-hsect}
Let $X$ have a bounded locally finite good hypercover. 
Then the dg-category $\cat H^{M}(X)$ is quasi-equivalent to 
the dg-category of homotopy locally constant sheaves of perfect complexes.
\end{reptheorem}

The homotopy category of the category of homotopy locally constant sheaves can be considered as the correct derived category of local systems on $X$  in the sense that it contains the abelian category of local systems but its Ext-groups are given by cohomology of $X$ with locally constant coefficients rather than group cohomology of the fundamental group.

The proof proceeds by using a strictification result for diagrams of dg-categories to show $\cat H^{M}(X)$, computed as a homotopy limit, is quasi-equivalent to a category of homotopy cartesian sections of a constant Quillen presheaf. Homotopy cartesian sections are then identified with homotopy locally constant shaves. 

\subsection{Set-up}
We fix throughout this paper a commutative ring $k$ and a topological space $X$ and assume that $X$ has a good hypercover $\mathfrak U = \{U_i\}_{i \in I}$. We say a hypercover is \emph{good} if all connected open sets that occur are contractible.

We will moreover assume that $\mathfrak U$ satisfies the following two conditions, which we sum up by saying $\mathfrak U$ is \emph{bounded locally finite}. 
\begin{itemize}
\item $\mathfrak U$ is locally finite. (Every point has a neighbourhood meeting only finitely many elements of $\mathfrak U$.)
\item There is some positive integer $n$ such that no chain of distinct open sets in $\mathfrak U$ has length greater than $n$. 
\end{itemize}

\begin{rk}
If $X$ is a finite-dimensional CW complex it has a bounded locally finite cover.
One can show this by induction on the $n$-skeleta using
\emph{collaring}, see Lemma 1.1.7 in \cite{Fritsch90}, to extend a bounded locally finite hypercover on $X_{n}$ to one on a neighbourhood of $X_{n}$ in $X_{n+1}$. Then one extends over the $n+1$-cells.
\end{rk}

\subsection{Morita Cohomology}
Morita cohomology $\cat H^{M}(X)$ can be defined as derived global sections of the constant presheaf of dg-categories with fiber equal to the category $\Chp$ of perfect chain complexes over a field $k$. Over an arbitrary ring $k$ it can be defined as $\Chp^{\Sing X}$ using the action of simplicial sets on dg-categories.

Given a good hypercover \mbox{of $X$} one can then compute
$\cat H^{M}(X)$ as the homotopy limit of the constant diagram with fiber $\Chp$ indexed by the hypercover. 

One can also compute it as the homotopy limit of a diagram indexed by the opposite of $I_0 \subset I$, the category of non-degenerate objects of the hypercover: 
$$\cat H^{M}(X) \simeq \holim_{I\op} \Chp \simeq \holim_{I_{0}\op} \Chp$$
In the next section 
we will use strictification to compute this small homotopy limit explicitly as a category of homotopy cartesian sections.
\begin{rk}
For further background and notational conventions see \cite{HolsteinA}. Relations to other work are explained in the introduction of \cite{HolsteinA}. 
\end{rk}

\section{strictification}
\subsection{Background on strictification}\label{sect-strict}

We begin this section with generalities on strictification and the computation of homotopy limits.

Let us consider the fiber $\Ch$ of all chain complexes at first, which has the advantage over $\Chp$ that it is a model category. Model categories are often a convenient model to do computations with $\oo$-categories. However, as the category of model categories is not itself a model category there exist no homotopy limits of model categories. Instead one can compute categories of homotopy cartesian sections and strictification results compare them to homotopy limits of the $\oo$-categories associated with the model categories in question. 

Generally speaking, using strictification to compute a homotopy limit proceeds as follows. Assume we have a localization functor $L\colon  \mathbf{MC} \ra \mathbf{\oo Cat}$ from model categories to some model of \mbox{$(\oo,1)$-categories} and let $\hsect$ denote the category of homotopy cartesian sections of a Quillen presheaf, explained below. Then given a diagram $(\cat M_i)$ of model categories indexed by $I$ one proves $\holim_{i \in I\op} L\cat M_i \simeq L\hsect(I, \cat M_i)$. 

We will proceed by adapting the strictification result for inverse diagrams of simplicial categories from Spitzweck \cite{Spitzweck10} to dg-categories.
$J$ is an \emph{inverse category} if one can associate to every element a non-negative integer, called the \emph{degree}, and every non-identity morphism lowers degree. This is certainly the case for $I_{0}$ if $\mathfrak U$ is bounded locally finite.

We then have to restrict to compact objects in the fibers to compute $R\Ga(X, \uChp)$ rather than $R\Ga(X, \underline \Ch)$. 

\begin{rk}
There is a wide range of strictification results in the literature: For simplicial sets \cite{Dwyer89, Toen05a}, simplicial categories  \cite{Spitzweck10},  Segal categories (Theorem 18.6 of \cite{Simpson01}) and complete Segal spaces \cite{Bergner08, Bergner10}.

Most of the above results make fewer assumptions on the index category, for example Theorem 18.6 of \cite{Simpson01} proves strictification of Segal categories with general Reedy index categories, and a generalization to arbitrary small simplicial index categories is mentioned in Theorem 4.2.1 of \cite{Toen02c}. But since it is unclear to the author how to adapt this proof to the dg-setting and since a bounded locally finite good hypercover for $X$ exists in many cases
we stay with it.
\end{rk}

We will deal with model categories 
that are already enriched in some symmetric monoidal model category $\cat V$ and our $\oo$-categories will be $\cat V$-categories. (Think $\cat V = \sSet$ or $\Ch$.) 

\begin{defn}
Denote by $L$ the localization functor $L\colon  \cat V \mathbf{MC} \ra \mathbf{\cat V Cat}$ that sends $M$ to $M^{cf}$, the subcategory of fibrant cofibrant objects of $M$. 
\end{defn}

The fibrant cofibrant replacement is necessary to ensure that the $\cat V$-hom spaces are invariant under weak equivalences. In the case $\cat V = \sSet$ compare the homotopy equivalence between $LM$ and the Dwyer--Kan localization of $M$.

Let us set up the machinery:
\begin{defn} 
A \emph{left Quillen presheaf} on a small category $I$ is a contravariant functor $M_\bullet\colon  I \ra \mathbf{Cat}$, written as $i \mapsto M_i$ such that for every $i \in \ob(I)$ the category $M_i$ is a model category and for every map $f\colon  i \ra j$ in $I$ the map $f^*\colon  M_j \ra M_i$ is left Quillen. (One can similarly define right Quillen presheaves.)
\end{defn}
\begin{defn} The \emph{constant left Quillen presheaf with fiber $M$}, denoted as $\underline M$ is the Quillen presheaf with $\underline M_i = M$ for all $i$ and $f^* = \id_M$ for all $f$.
\end{defn}
\begin{rk}
One can define Quillen presheaves in terms of pseudofunctors instead of functors, see \cite{Spitzweck10}. One then rectifies the pseudofunctor to turn it into a suitable functor, 
i.e.\ into a left Quillen presheaf as defined above.
\end{rk}

\begin{defn}
Let $M_\bullet$ be a left Quillen presheaf of model categories. 
We define a \emph{left section} to be a tuple consisting of $(X_i, \phi_f)$ for $i \in \ob(I)$ and $f\in \mor(I)$ where $X_i \in M_i$ and $\phi_f\colon  f^*X_j \ra X_i$, satisfies $\phi_g \circ (g^* \phi f) = \phi_{f \circ g}\colon  (f \circ g)^* X_k \ra X_i$ for composable pairs $g\colon  X_i \ra X_j$, $f\colon  X_j \ra X_k$.t

A \emph{morphism of sections} consists of $m_i\colon  X_i \ra Y_i$ in $M_i$ making the obvious diagrams commute. We write the category of sections of $M_\bullet$ as $\psect(I,M_\bullet)$. The levelwise weak equivalences make it into a homotopical category.
\end{defn}
\begin{defn}
A \emph{homotopy cartesian section} is a section for which all the comparison maps $\phi_f\colon  Rf^*X_j \ra X_i$ are isomorphisms in $Ho(M_{i})$. We write the category of homotopy cartesian sections of $M$ as $\hsect(I,M_{\bullet})$.
\end{defn}

If $I$ is an inverse category or $M$ is combinatorial
then the category of left sections $\psect(I,M)$ has an injective model structure, just like a diagram category, in which the weak equivalences and cofibrations are defined levelwise, 
cf. Theorem 1.32 of \cite{Barwick07}. 

We write $L\hsect(I, M_{\bullet})$ for the subcategory of homotopy coherent sections whose objects are moreover fibrant and cofibrant.

Note that $\hsect(I, M_{\bullet})$ is not itself a model category since it is not in general closed under limits.

\begin{rk}
One would like homotopy cartesian sections to be the fibrant cofibrant objects in a suitable model structure.
If we are working with the projective model structure of right sections then (under reasonable conditions) there exists a Bousfield localization, the so-called homotopy limit structure (cf. Theorem 2.44 of \cite{Barwick07}). The objects of $L\hsect_{R}(I,M)$ (which are projective fibrant) are precisely the fibrant cofibrant objects of $(\psect_R)_{holim}(I, M)$. 

The homotopy limit structure on left sections is subtler. It is the subject matter of \cite{Barwick10}. Assuming the category of left sections is a right proper model category Bergner constructs a right Bousfield localization where the cofibrant objects are the homotopy cartesian ones in Theorem 3.2 of \cite{Bergner10}. Without the hard properness assumption the right Bousfield localization only exists as a right semimodel category, cf. \cite{Barwick07a}. 

Note that we will still use model category theory, all we are losing is a conceptually elegant characterization of the subcategory we are interested in.
\end{rk}

\subsection{Strictification for dg-categories}\label{sect-dgstrict}
Our goal now is to prove the following theorem:
\begin{thm}\label{thm-hsect-holim}
Let $I$ be a direct category. Let $M_{i}$ be a presheaf of model categories enriched in $\Ch$. Then $L\hsect(I, M_{\bullet}) \cong \holim_{i \in I\op} LM_{i}$ in $Ho(\dgCat_{DK})$.
\end{thm}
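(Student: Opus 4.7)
The plan is to adapt Spitzweck's strictification theorem for simplicial categories (Theorem 4.5 of \cite{Spitzweck10}) to the dg setting, exploiting the degree filtration $I_{0} \subset I_{1} \subset \cdots \subset I$ coming from the direct structure of $I$ to induct on the length of the filtration.

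First I would perform the standard preparatory reduction: replace $M_\bullet$ by a weakly equivalent left Quillen presheaf for which the objectwise cofibrant-fibrant replacement assembles coherently across $I$, so that $L\hsect(I, M_\bullet)$ can be modelled by sections whose structure maps $\phi_f$ are honest weak equivalences rather than merely derived ones. This uses the injective model structure on $\psect(I, M_\bullet)$ (Theorem 1.32 of \cite{Barwick07}), which is available because $I$ is direct.

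For the inductive step, let $x_1, \ldots, x_r$ be the objects of degree exactly $n$, and for each $x_j$ write $\partial x_j$ for the subcategory of non-identity morphisms into $x_j$ (equivalently, the latching category). Restriction along the inclusion $I_{n-1} \hookrightarrow I_n$ should fit into a pullback square in $\dgCat_{DK}$ whose other corners are $\prod_j LM_{x_j}$ and $\prod_j L\hsect(\partial x_j, M_\bullet)$, where the bottom horizontal map sends $X_{x_j}$ to the constant cartesian section $i \mapsto f^* X_{x_j}$ on $\partial x_j$. Extending a homotopy cartesian section from $I_{n-1}$ to $I_n$ amounts precisely to a choice of $X_{x_j} \in M_{x_j}^{cf}$ together with a compatibility equivalence in $L\hsect(\partial x_j, M_\bullet)$ between this constant section and the restriction of the given one. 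An entirely analogous pullback description holds for the homotopy limit side over the inverse category $I_n\op$, via the Reedy decomposition of diagrams on inverse categories into their $(n-1)$-skeleton plus matching data at the new objects. Comparing the two pullback squares using the inductive hypothesis applied to $I_{n-1}$ and to each smaller latching piece $\partial x_j$ then closes the induction; the base case $n = 0$, where $I_0$ is discrete, is a direct product formula with both sides equal to $\prod_i LM_i$.

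The hard part will be verifying that the restriction dg-functor $L\hsect(I_n, M_\bullet) \ra L\hsect(I_{n-1}, M_\bullet)$ is genuinely a fibration in Tabuada's $\dgCat_{DK}$ and that the strict pullback above really computes the homotopy pullback. Spitzweck handles the simplicial analogue via a careful analysis of path objects and matching-object squares; these arguments must be rechecked in the dg setting, using dg-enriched mapping complexes and the characterisation of fibrations in $\dgCat_{DK}$ as dg-functors that are surjective up to homotopy on mapping complexes and that lift isomorphisms in homotopy. Once these fibration properties are secured, the induction propagates cleanly and delivers the claimed quasi-equivalence $L\hsect(I, M_\bullet) \simeq \holim_{I\op} LM_i$.
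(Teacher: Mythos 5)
Your proposal takes a genuinely different route from the paper's. The paper (following \cite{Spitzweck10}) does not decompose either side as an iterated homotopy pullback along the degree filtration; instead it embeds both $L\hsect(I, M_{\bullet})$ and $\holim_{I\op} LM_{\bullet}$ into one ambient category $L\psect(I, \cat V Psh(RLM_{\bullet}))$ --- via the enriched Yoneda embedding on the sections side, and via the universal maps out of the limit composed with Yoneda on the other --- and then identifies both images with the homotopy cartesian sections whose components lie in the image of the $M_{i}$ (Lemmas \ref{lemma-image-rho1} and \ref{lemma-image-rho2}), yielding a zig-zag of quasi-equivalences. Induction on the degree of the index category does occur, but only inside the proof that the two images coincide, not as a global pullback decomposition.

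There are genuine gaps in your version. First, your matching-object decomposition of the homotopy limit side presupposes that $i \mapsto LM_{i} = M_{i}^{cf}$ is an honest $I\op$-diagram of dg-categories to which a Reedy-style analysis applies. It is not: the transition functors $f^{*}$ are left Quillen and do not preserve fibrant objects, so one must first pass to an injectively fibrant replacement $RLM_{\bullet}$ of the diagram, whose fibers are then no longer model categories. This is exactly the obstruction that forces the detour through $\cat V Psh(RLM_{\bullet})$; your proposal offers no substitute, so the pullback square on the limit side is not well-posed as stated. Second, the claims that the restriction dg-functors are fibrations in $\dgCat_{DK}$ and that the strict pullbacks compute homotopy pullbacks are precisely the hard content (the dg analogue of Spitzweck's Lemma 6.6 and its supporting results, where the characterization of fibrations via lifting of homotopy equivalences is actually used); deferring them leaves the theorem essentially unproved. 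Note also that $DK$-fibrations are required to be degreewise surjective on hom-complexes, not merely ``surjective up to homotopy''. Finally, your preparatory reduction is unnecessary: objects of $L\hsect$ are injectively cofibrant, hence levelwise cofibrant, so $f^{*}X_{j}$ already computes the derived pullback and homotopy cartesianness is simply the requirement that the strict maps $\phi_{f}$ be weak equivalences.
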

With the results of \cite{HolsteinA} this theorem implies the following: 

\begin{cor}\label{cor-rgamma-hsect}
Let $\{U_i\}_{i \in I}$ be a locally finite good hypercover of $X$. Then $\cat H^{M}(X) \simeq \holim_{I_0\op} \Ch
\simeq L\hsect(I_0, \Ch)$. 
\end{cor}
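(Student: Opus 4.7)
The plan is to chain together two equivalences: the computation of Morita cohomology as a homotopy limit, recalled in Section 1.2 from \cite{HolsteinA}, and the strictification statement of Theorem \ref{thm-hsect-holim}.

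For the first equivalence $\cat H^{M}(X) \simeq \holim_{I_0\op} \Ch$, I would invoke from \cite{HolsteinA} the formula $\cat H^{M}(X) \simeq \holim_{I\op} \Chp$, which holds because Morita cohomology is by definition derived global sections of the constant presheaf of dg-categories with fiber $\Chp$ and because $\mathfrak U$ is a good hypercover. The further reduction $\holim_{I\op}\Chp \simeq \holim_{I_0\op}\Chp$ to the non-degenerate subcategory is the standard cofinality argument already flagged in the introduction. The switch from $\Chp$ to the larger $\Ch$ in the corollary's middle term is needed so that Theorem \ref{thm-hsect-holim}, which requires a genuine model category as fiber, applies; perfectness is to be enforced afterwards (as signalled in Section 2.1).

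For the second equivalence $\holim_{I_0\op} \Ch \simeq L\hsect(I_0, \Ch)$, I would apply Theorem \ref{thm-hsect-holim} to the constant left Quillen presheaf $\underline\Ch$ on $I_0\op$. The enrichment and Quillen conditions are immediate for $\underline\Ch$, so the only nontrivial hypothesis is that $I_0\op$ be direct, equivalently that $I_0$ be inverse. This is precisely what the bounded locally finite assumption on $\mathfrak U$ provides: the length of any chain of distinct objects ending at a given vertex is bounded by $n$, furnishing the required degree function, and this degree strictly decreases along any non-identity morphism.

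The main point requiring care is variance. Applied as above, Theorem \ref{thm-hsect-holim} produces $L\hsect(I_0\op, \underline\Ch) \simeq \holim_{I_0} L\Ch$, whereas the corollary asks for $L\hsect(I_0, \underline\Ch) \simeq \holim_{I_0\op} L\Ch$. I would resolve this by noting that for a constant Quillen presheaf both constructions are insensitive to reversing the indexing category: the cartesian condition is vacuous on identity transition maps, so $\hsect$ over $I_0$ and over $I_0\op$ are canonically identified, and the homotopy limit of a constant diagram depends only on the classifying space of the indexing category, which coincides with that of its opposite. Concatenating the two equivalences then gives the corollary.
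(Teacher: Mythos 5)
Your overall route is exactly the paper's: the corollary is obtained by concatenating the identification $\cat H^{M}(X)\simeq\holim_{I\op}\Chp\simeq\holim_{I_{0}\op}\Chp$ from \cite{HolsteinA} with Theorem \ref{thm-hsect-holim} applied to the constant left Quillen presheaf; the paper supplies no further argument. The substance you add on top of that is the variance discussion, and that is where there is a genuine problem. You apply Theorem \ref{thm-hsect-holim} with index category $I_{0}\op$ and then claim that $\hsect(I_{0},\underline\Ch)$ and $\hsect(I_{0}\op,\underline\Ch)$ are ``canonically identified'' because the cartesian condition is vacuous for identity transition maps. It is not vacuous, and the categories are not identified: for the constant presheaf a left section over $J$ is a functor $J\op\ra\Ch$, and the homotopy cartesian ones are those sending every morphism to a quasi-isomorphism; so the two categories consist of functors out of $I_{0}\op$ and out of $I_{0}$ respectively. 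They are equivalent only after inverting weak equivalences, and the natural proof of that equivalence identifies each side with the homotopy limit of the constant diagram over the respective index category --- which is precisely the statement you are in the middle of proving, so as written the step is circular (the same objection applies to invoking $|N(I_{0})|\cong|N(I_{0}\op)|$ for the homotopy limits: true, but it presupposes the comparison you want).

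The repair is to apply the theorem with index category $I_{0}$ itself, which yields $L\hsect(I_{0},\Ch)\cong\holim_{I_{0}\op}L\Ch$ verbatim, with no flip. The hypothesis ``direct'' in the statement of Theorem \ref{thm-hsect-holim} is at odds with the rest of Section 2: the injective model structure on left sections and Spitzweck's strictification, which is what is being adapted, are set up for \emph{inverse} index categories, and Section 2.1 verifies precisely that $I_{0}$ is inverse when $\mathfrak U$ is bounded locally finite. So the correct reading of the theorem's hypothesis is the one your own degree-function argument establishes for $I_{0}$, and your detour through $I_{0}\op$ is both unnecessary and, as justified, invalid. One further small point: your first step proves $\cat H^{M}(X)\simeq\holim_{I_{0}\op}\Chp$ and then ``switches'' to $\Ch$; since $\holim_{I_{0}\op}\Chp$ and $\holim_{I_{0}\op}\Ch$ are certainly not equivalent, the first displayed equivalence of the corollary has to be read with the $\Ch$-fibred global sections $R\Ga(X,\underline\Ch)$ (cf.\ the remark in Section 2.1 distinguishing $R\Ga(X,\uChp)$ from $R\Ga(X,\underline\Ch)$), the restriction to perfect fibres being deferred to Proposition \ref{propn-perfect-fibers}. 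Your parenthetical acknowledges this, but the step should be stated as a change of what is being computed rather than as an equivalence.
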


We will consider in Section \ref{sect-perfect} how to restrict to $\Chp$.

To show Theorem \ref{thm-hsect-holim} we adapt the proof in \cite{Spitzweck10}, replacing enrichments in simplicial sets by enrichment in chain complexes wherever appropriate. For easier reference we write in terms of $\cat V$-categories, where $\cat V = \Ch$ for our purposes and $\cat V = \sSet$ in \cite{Spitzweck10}.

One simplification is that we are assuming the model categories we start with are already enriched in $\Ch$, so that we can use restriction to fibrant cofibrant objects instead of Dwyer--Kan localization as the localization functor.

There are two times two steps to the proof: 
First one defines homotopy embeddings $\rho_{1}$ and $\rho_{2}$ of the two sides into $L\psect(I, \cat V PSh(RLM_\bullet))$. One then shows that their images are given by homotopy cartesian section whose objects are in the image of $M_{i}$.
The first pair of steps are quite formal. The second pair is given by explicit constructions using induction along the degree of the index category.

The proof of the strictification result depends on setting up a comparison between the limit construction and presections. Since the fibrant replacement of $LM_{\bullet}$ is not a Quillen presheaf one has to embed everything into a presheaf of enriched model categories. This is achieved by using the Yoneda embedding.

For the reader's convenience, let us recall the construction of enrichments of presections and presheaves that will be used.

Assume that $\cat V$ is a symmetric monoidal model category and that the we are given a left Quillen presheaf such that all the $M_{i}$ are model $\cat V$-categories. Note that $\cat V$ will be the category $\Ch$ in our application.

If $M_{\bullet}$ is as above and the comparison functors are $\cat V$-functors then $\psect(I, M_\bullet)$ is a model $\cat V$-category:
Tensor and cotensor can be defined levelwise and we define $\uHom_{\psect}(X_\bullet, Y_\bullet)$ as the end $\int_i \uHom(X_i, Y_i)$. 
Since cofibrations and weak equivalences in $\psect(I, M_\bullet)$ are defined levelwise the pushout product axiom holds 
and we have a model $\cat V$-structure.

It follows that the derived internal hom-spaces can be computed as homotopy ends, by cofibrantly and fibrantly replacing source and target: $R\uHom_{\psect}(X_{\bullet}, Y_{\bullet}) = \int_i \uHom((QX)_i, (RY)_i)$. 
See Lemma 2 
in \cite{HolsteinA}. 
In particular if all $M_i$ are dg-model categories then $\psect(I,M)$ is a dg-model category.

\begin{defn}
If $M$ is enriched in $\cat V$ let $\cat VPsh(M)$ be the category of $\cat V$-functors from $M$ to $\cat V$, i.e.\ functors such that the induced map on hom-spaces is a morphism \mbox{in $\cat V$}.
\end{defn}

$\cat V Psh(M)$ is a model category if $\cat V$ has cofibrant hom-spaces or if $\cat V = \Ch$, see Remark 1 in \cite{HolsteinA}. 
Moreover $\cat VPsh(M)$ is enriched, tensored and cotensored over $\cat V$, see for example Chapter 1 of \cite{Kelly82}. 

Next note that there is an enriched Yoneda embedding $M \ra \cat VPsh(M)$. If $\cat V$ has a cofibrant unit and fibrant hom-spaces then the Yoneda embedding factors through the subcategory of fibrant cofibrant objects. 
(To see the image consists of cofibrations, we recall that the maps $0 \ra h^{X} \otimes \id$ are generating cofibrations.)

These conditions are satisfied in $\Ch$.

We write $RLM_{\bullet}$ for $i \mapsto (RLM)_{i}$, where $R$ stands for fibrant replacement in the injective model structure on diagrams of $\cat V$-categories and $L$ is taking fibrant cofibrant objects of every $M_{i}$. 

We now have the following:
\begin{lemma} There is a natural homotopy $\cat V$-embedding 
\[\rho_{1}\colon  L\hsect M_{\bullet} \hookrightarrow L\psect(I, \cat V PSh(RLM_{\bullet}))\] \end{lemma}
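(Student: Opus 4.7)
The plan is to build $\rho_1$ levelwise from the $\cat V$-enriched Yoneda embedding, and to verify the embedding property by combining the end formula for internal homs of sections with the $\cat V$-enriched Yoneda lemma.

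\textbf{Construction of $\rho_1$.} Given $X = (X_i, \phi_f) \in L\hsect M_\bullet$, each $X_i$ is fibrant cofibrant in $(RLM)_i$. Apply the enriched Yoneda embedding $Y_i\colon (RLM)_i \hookrightarrow \cat V Psh((RLM)_i)$ pointwise to obtain representables $h^{X_i}$. For $f\colon i \to j$ in $I$, the structure $\cat V$-functor $f^*$ of $RLM_\bullet$ has a Yoneda extension $f^*_!\colon \cat V Psh((RLM)_j) \to \cat V Psh((RLM)_i)$, which is left Quillen and satisfies $f^*_! h^{X_j} \cong h^{f^* X_j}$; thus $\phi_f$ induces $f^*_! h^{X_j} \to h^{X_i}$, and functoriality of Yoneda yields the cocycle condition. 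Consequently $\rho_1 X := (h^{X_i}, h^{\phi_f})$ is a presection of the Quillen presheaf $\cat V Psh(RLM_\bullet)$, and on hom spaces $\rho_1$ acts through the Yoneda embedding, hence is a $\cat V$-functor. Because in $\cat V = \Ch$ the Yoneda embedding factors through fibrant cofibrant objects, after a functorial injective fibrant replacement on the presection side $\rho_1 X$ lies in $L\psect(I, \cat V Psh(RLM_\bullet))$.

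\textbf{Verification of the embedding property.} Using the enrichment of presections, and the fact that for fibrant cofibrant sections the derived homs are computed as plain ends,
\[ R\uHom_{\psect}(X, Y) = \int_i \uHom_{(RLM)_i}(X_i, Y_i), \]
\[ R\uHom_{\psect}(\rho_1 X, \rho_1 Y) = \int_i \uHom_{\cat V Psh((RLM)_i)}(h^{X_i}, h^{Y_i}). \]
By the $\cat V$-enriched Yoneda lemma, the inner hom on the presheaf side is naturally isomorphic in $\cat V$ to $\uHom_{(RLM)_i}(X_i, Y_i)$, and ends preserve isomorphisms. Hence $\rho_1$ induces an isomorphism, in particular a weak equivalence, on derived $\cat V$-enriched hom spaces, so it is a homotopy $\cat V$-embedding.

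\textbf{Main obstacle.} The delicate point is the bookkeeping of fibrant replacements. Working with $LM_\bullet$ directly would only give pseudofunctorial structure maps, for which ends need not compute derived homs and Yoneda extensions are not literally functorial; passing through $RLM_\bullet$ simultaneously resolves both issues. Cofibrancy of $h^{X_i}$ in $\cat V Psh((RLM)_i)$ is immediate since representables are cofibrant over $\Ch$, but fibrancy of $\rho_1 X$ in the presection model structure requires a functorial injective fibrant replacement that preserves the Yoneda data up to weak equivalence; this is where the argument is most technical.
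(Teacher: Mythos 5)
Your proof takes essentially the same route as the paper: a levelwise enriched Yoneda embedding into $\cat V Psh(RLM_\bullet)$, combined with the fact that hom-complexes of fibrant cofibrant presections are computed as (homotopy) ends, so that levelwise weak equivalences on hom-spaces assemble to a weak equivalence of the ends. The one imprecision is that the domain hom-space is $\int_i \uHom_{M_i}(X_i,Y_i)$ rather than $\int_i \uHom_{(RLM)_i}(X_i,Y_i)$; the comparison between the two is only a levelwise quasi-isomorphism, which is precisely where the homotopy invariance of the end is needed and why $\rho_1$ is only a \emph{homotopy} embedding.
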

\begin{proof}
We have an embedding $\hsect \hookrightarrow \psect$ and homotopy embeddings $M_{i} \hookrightarrow \cat V Psh(RLM_{i})$ which give a homotopy embedding when we apply $L\psect(I, -)$ since the hom-spaces of presections between fibrant cofibrant objects are given by homotopy ends, which are invariant under levelwise weak equivalence.
\end{proof}

\begin{lemma}\label{lemma-limit-embedding}
Let $D_i$ be an $I\op$-diagram of $\cat V$-categories. We have a canonical full $\cat V$-embedding:
\[ \rho_2\colon  \holim D_\bullet = \lim RD_\bullet \hookrightarrow L\psect(I, \cat V PSh(RD_\bullet))\]
\end{lemma}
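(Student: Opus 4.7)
The idea is to define $\rho_2$ by composing the levelwise Yoneda embedding with the inclusion of strictly cartesian sections into $\psect$, and to establish full faithfulness via the enriched Yoneda lemma.

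An object of $\lim RD_\bullet$ is a strictly compatible family $X = (X_i)_{i \in I}$ with $X_i \in RD_i$ and $D_\bullet(f)(X_j) = X_i$ for every $f \colon i \to j$ in $I$. I would set $\rho_2(X) = (h^{X_i})_i$, where $h^{X_i} \in \cat V Psh(RD_i)$ is the image under the enriched Yoneda embedding. The left Quillen presheaf structure on $\cat V Psh(RD_\bullet)$ uses left Kan extension along the transition functors, and a standard co-Yoneda computation gives $D_\bullet(f)_! h^{X_j} \cong h^{D_\bullet(f)(X_j)} = h^{X_i}$. Hence $\rho_2(X)$ is a presection with identity comparison maps, and the cocycle condition is automatic. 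On morphisms, $\rho_2$ is defined by applying Yoneda levelwise and is $\cat V$-functorial since Yoneda is.

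To land in $L\psect$, I need $\rho_2 X$ to be fibrant and cofibrant in the injective model structure. Cofibrations are levelwise, and each $h^{X_i}$ is cofibrant since generating cofibrations have the form $0 \cof h^X \otimes \id$ as recalled in the excerpt; fibrancy of each $h^{X_i}$ in $\cat V Psh(RD_i)$ uses that $\cat V = \Ch$ has fibrant hom-spaces. Verifying fibrancy of the assembled presection itself is the main obstacle I expect; this should follow from the strictness of the cartesian structure, but may require an argument specific to the injective model structure on $\psect$.

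Full faithfulness then reduces to enriched Yoneda. Since source and target are fibrant-cofibrant, the hom-space in $\psect$ is computed as
\[
\uHom_{\psect}(\rho_2 X, \rho_2 Y) \cong \lim_i \uHom_{\cat V Psh(RD_i)}(h^{X_i}, h^{Y_i}) \cong \lim_i \uHom_{RD_i}(X_i, Y_i),
\]
the first isomorphism because the comparison maps are identities so the end defining $\uHom_{\psect}$ reduces to a limit, and the second by the enriched Yoneda lemma applied at each level. This limit is precisely $\uHom_{\lim RD_\bullet}(X, Y)$ by the universal property of the limit of $\cat V$-categories, so $\rho_2$ is a full $\cat V$-embedding.
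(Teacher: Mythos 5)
Your construction of $\rho_2$ is the same as the paper's: send a point of the strict limit to its family of projections (a presection with identity comparison maps), apply the enriched Yoneda embedding levelwise, and get full faithfulness from the fact that the hom-space in $\lim_i C_i$ is the end $\int_i \Hom(c_i,d_i)$, which is exactly $\uHom_{\psect}$ between the images, combined with Yoneda being fully faithful at each level. Cofibrancy of the image is also handled as in the paper, via the levelwise description of cofibrations and the generating cofibrations $0 \to h^X \otimes \id$.

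The one step you leave open --- injective fibrancy of the assembled section --- is the only genuinely non-formal point, and you should notice that this is precisely why the statement is phrased in terms of $RD_\bullet$ rather than $D_\bullet$: the fibrant replacement $R$ is taken in the injective model structure on $I$-diagrams of $\cat V$-categories, and it is the injective fibrancy of the \emph{diagram} $RD_\bullet$ that forces the strict sections built from its limit to be fibrant objects of $\psect(I, \cat V Psh(RD_\bullet))$. Your suggestion that fibrancy ``should follow from the strictness of the cartesian structure'' is not right on its own --- a strict section of a non-fibrant diagram need not be injectively fibrant; the strictness must be combined with the fibrancy of $RD_\bullet$. The paper does not reprove this but cites Lemma 6.3 of Spitzweck, so your proposal is essentially the paper's argument with that citation replaced by an accurate acknowledgement that something remains to be checked.
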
 
\begin{proof} The map to $\psect(I, \cat V Psh(D_{\bullet}^{f}))$ is obtained by composing the Yoneda embedding with the map of $\cat V$-categories $\lim_{i} C_i \ra \psect(I, C_{\bullet})$ that sends $a$ to $\{\pi_i(a)\}$ if $\pi_i\colon  \lim_{j} C_j \ra C_i$ are the universal maps. ($C_{\bullet}$ is not a model category, but we can still take $\psect$ with the obvious meaning, the comparison maps are identities by definition.)
Recall that the hom-space in $\lim C_{i}$ from $\{c_{i}\}$ and $\{d_{i}\}$ is given by $\int_{i} \Hom(c_{i}, d_{i})$.
Hence there is an embedding of the homotopy limit into $\psect(I, C_\bullet)$. 
To show this embedding factors through fibrant cofibrant objects note first that cofibrations are defined levelwise. For fibrations one uses the fibrancy of $RLM_{\bullet}$, this is Lemma 6.3 of \cite{Spitzweck10}.\end{proof}
It follows from this embedding that homotopy equivalences in the homotopy limit are determined levelwise since in $L\psect$ homotopy equivalences are weak equivalences and weak equivalences are defined levelwise. This is Corollary 6.5 in \cite{Spitzweck10}.

From now on we will write $\rho_{2}$ for the case $D_{i} = LM_{i}$.

Next we have to identify the images of $\rho_{1}$ and $\rho_{2}$.
The explicit computation is done in Lemma 6.6 of \cite{Spitzweck10}. The only use of special properties of the category $\sCat$ made in this lemma (and the results needed for it) is the characterization of fibrations in terms of lifting homotopy equivalences. But this characterization is also valid for fibrations in $\dgCat_{DK}$. (A more detailed treatment is available in Section 3.2
of the author's thesis \cite{Holstein0}.)
Thus we have the following results:

\begin{lemma}\label{lemma-image-rho1}
The image of $\rho_{1}$ consists of homotopy cartesian sections $X_{\bullet} \in L\psect(I, \cat V Psh(RLM_{\bullet}))$ such that all $X_{i}$ are in the image of $M_{i}$.
\end{lemma}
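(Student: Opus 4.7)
The plan is to verify both inclusions separately, using that the enriched Yoneda embedding $y_{i}\colon RLM_{i} \hookrightarrow \cat V Psh(RLM_{i})$ is a fully faithful $\cat V$-functor landing in fibrant cofibrant objects (as noted after the definition of $\cat V Psh$), together with the compatibility of Yoneda with the restriction functors $f^{\ast}$ along morphisms $f\colon i \to j$ in $I$.

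First I would handle the forward inclusion: given $(X_{i},\phi_{f}) \in L\hsect(I, M_{\bullet})$, by construction $\rho_{1}$ produces the section with components $y_{i}(X_{i})$ and comparison maps $y_{i}(\phi_{f})$, after identifying $f^{\ast}y_{j}(X_{j})$ with $y_{i}(f^{\ast}X_{j})$ via the canonical $\cat V$-natural isomorphism. The components are by definition in the image of $M_{i}$ (in fact of $RLM_{i}$). The comparison maps $y_{i}(\phi_{f})$ are weak equivalences in $\cat V Psh(RLM_{i})$ because the enriched Yoneda embedding is homotopical on $RLM_{i}$ and $\phi_{f}$ is a weak equivalence by the homotopy cartesian hypothesis. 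Hence $\rho_{1}(X_{\bullet})$ is homotopy cartesian with components in the image of $M_{i}$.

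For the backward inclusion, suppose $(Y_{i},\psi_{f}) \in L\psect(I, \cat V Psh(RLM_{\bullet}))$ is homotopy cartesian and each $Y_{i}$ is in the image of $y_{i}$, i.e.\ $Y_{i} \simeq y_{i}(X_{i})$ for some $X_{i} \in RLM_{i}$. Full faithfulness of the derived Yoneda embedding transports each $\psi_{f}\colon f^{\ast}y_{j}(X_{j}) \simeq y_{i}(f^{\ast}X_{j}) \to y_{i}(X_{i})$ to a map $\phi_{f}\colon f^{\ast}X_{j} \to X_{i}$ in $Ho(M_{i})$, and because Yoneda reflects weak equivalences each $\phi_{f}$ is an isomorphism in $Ho(M_{i})$. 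Cocycle compatibility for the $\phi_{f}$ follows from the cocycle identity for $\psi_{f}$ by applying full faithfulness on each composable pair. The resulting tuple is then a preimage (up to weak equivalence) in $L\hsect(I, M_{\bullet})$.

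The main obstacle I expect is not either inclusion in isolation but the bookkeeping linking the two: specifically, the compatibility $y_{i} \circ f^{\ast} \cong f^{\ast} \circ y_{j}$ needs to be genuinely natural so that cocycle data transports coherently in both directions, and the full faithfulness of the derived Yoneda embedding has to be invoked in the enriched (dg) setting rather than the simplicial one of Spitzweck. Because the earlier paragraph has already arranged that $\cat V = \Ch$ has a cofibrant unit and fibrant hom-complexes, so that $y_{i}$ lands in fibrant cofibrant presheaves, these are technical checks rather than substantial obstructions, and one can essentially read the argument off Lemma 6.6 of \cite{Spitzweck10} after replacing simplicial enrichment with $\Ch$-enrichment throughout.
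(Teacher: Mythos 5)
Your forward inclusion is fine; that is the half the paper itself describes as formal. The gap is in the backward inclusion, which is the actual content of the lemma. An object of $L\hsect(I, M_{\bullet})$ is by definition a genuine left section: honest morphisms $\phi_{f}\colon f^{*}X_{j}\to X_{i}$ in $M_{i}$ satisfying $\phi_{g}\circ(g^{*}\phi_{f})=\phi_{f\circ g}$ on the nose. Transporting the comparison maps $\psi_{f}$ back through the fully faithful derived Yoneda embedding only produces morphisms in $Ho(M_{i})$ whose cocycle identities hold up to (unspecified) homotopy; such a homotopy-coherent datum is not yet an object of $\psect(I,M_{\bullet})$, and upgrading it to one is exactly the strictification problem the lemma is meant to solve. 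A symptom of the gap is that your argument never uses the standing hypothesis that $I$ is a direct/inverse category with bounded degree, which is precisely what makes the rectification possible.

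The paper does not give a self-contained proof either: it imports Lemma 6.6 of \cite{Spitzweck10} and checks that the one non-formal ingredient transfers to the dg setting. That ingredient is an induction along the degree of the index category: given a homotopy cartesian section with components in the image of $M_{i}$, one inductively chooses lifts $Y'_{i}\simeq X_{i}$ and then replaces them by weakly equivalent $Y_{i}$ that assemble into a strictly commuting, strictly homotopy cartesian section, using at each stage that a suitable (matching-object) map is a fibration and that fibrations in $\dgCat_{DK}$, like those in $\sCat$, are detected by lifting of homotopy equivalences. That fibration-lifting step is the substantial point the paper flags, and it is absent from your proposal; without it, ``cocycle compatibility follows by full faithfulness'' only yields compatibility in the homotopy category, which does not identify the image of $\rho_{1}$.
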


\begin{lemma}\label{lemma-image-rho2}
The image of $\rho_{2}$ consists of homotopy cartesian sections $X_{\bullet} \in L\psect(I, \cat V Psh(RLM_{\bullet}))$ such that all $X_{i}$ are in the image of $M_{i}$.
\end{lemma}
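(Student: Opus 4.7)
The plan is to establish two inclusions, paralleling the argument of Spitzweck's Lemma 6.6 with the fibration characterization in $\dgCat_{DK}$ as the only new ingredient. First observe that $\rho_{2}$ factors through the map $\lim RLM_{\bullet} \ra \psect(I, RLM_{\bullet})$ sending a compatible family $(y_{i})$ to the strict presection with identity comparison maps, followed by the levelwise Yoneda embedding into $\psect(I, \cat V Psh(RLM_{\bullet}))$. Hence the image of $\rho_{2}$ consists of presections whose comparison maps $\phi_{f}$ are literally identities (and so trivially homotopy cartesian) and whose components $X_{i} = h^{y_{i}}$ lie in the Yoneda image of $RLM_{i}$, which is exactly the condition ``in the image of $M_{i}$'' used in Lemma \ref{lemma-image-rho1}. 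This gives one inclusion.

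For the reverse inclusion, let $X_{\bullet}$ be a homotopy cartesian section with each $X_{i}$ equivalent to $h^{x_{i}}$ for some $x_{i} \in RLM_{i}$. The task is to adjust the $x_{i}$ within their homotopy equivalence classes to a strictly compatible family $y_{\bullet} = (y_{i})$ satisfying $f^{*}(y_{j}) = y_{i}$ for every $f\colon i \ra j$ in $I$. Proceed by induction on the degree function of the direct category $I$. At each stage, the homotopy cartesian hypothesis together with the $y$'s already chosen produces a homotopy equivalence in the appropriate $RLM_{i}$ between the current candidate $x_{i}$ and the matching datum forced by strict compatibility; using the injective fibrancy of $RLM_{\bullet}$ as a diagram of $\cat V$-categories, combined with the characterization of fibrations in $\dgCat_{DK}$ via lifting of homotopy equivalences on objects, one rectifies the equivalence to an equality and extends the construction. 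The resulting family lies in $\lim RLM_{\bullet}$, and by naturality of Yoneda $\rho_{2}(y_{\bullet}) \simeq X_{\bullet}$ in $L\psect(I, \cat V Psh(RLM_{\bullet}))$.

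The main obstacle is the inductive rectification step: one must simultaneously promote the homotopy equivalences implicit in the $\phi_{f}$ to strict equalities and carry this out coherently along the entire degree filtration. This is precisely what the injective fibrancy of $RLM_{\bullet}$ together with the $\dgCat_{DK}$ fibration characterization (surjectivity on Hom-complexes plus lifting of quasi-equivalences on objects, as noted in the passage preceding the lemma) are tailored to provide; with these tools in place the remainder of Spitzweck's argument for $\sCat$ transfers to $\dgCat_{DK}$ without essential change, yielding the desired identification of the image of $\rho_{2}$.
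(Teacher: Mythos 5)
Your proposal is correct and follows essentially the same route as the paper, which simply defers to Spitzweck's Lemma 6.6 and notes that the only $\sCat$-specific ingredient is the characterization of fibrations in $\dgCat_{DK}$ via surjectivity on Hom-complexes and lifting of homotopy equivalences. Your inductive rectification along the degree filtration, lifting the homotopy equivalence with the matching datum through the fibration supplied by injective fibrancy of $RLM_{\bullet}$, is exactly the argument the paper has in mind (and later recalls explicitly in the proof of Proposition \ref{propn-perfect-fibers}).
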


Putting this together we obtain a zig-zag of quasi-essentially surjective maps between $L\psect(I, M_{\bullet})$ and $\holim_{I\op} LM$, showing the two categories are isomorphic in $Ho(\dgCat_{DK})$.

\subsection{Restriction to perfect complexes}\label{sect-perfect}
In this section we restrict the equivalence obtained by strictification to sections with compact fibers.

The compact objects in $\Ch$ form the subcategory $\Chp$ consisting of complexes quasi-isomorphic to perfect complexes. 
Note that $\Chp$ is not a model category, so in the next lemma we extend strictification to subcategories.

\begin{propn}\label{propn-perfect-fibers}
The dg-category $\holim_{I\op} \Chp$ is quasi-equivalent to the dg-category $L\hsect(I, \Chp)$, defined to be the subcategory of $L\hsect(I, \Ch)$ consisting of sections $X_{\bullet}$ such that every $X_{i}$ is in $\Chp$.
\end{propn}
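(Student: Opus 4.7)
The plan is to deduce this proposition directly from Theorem \ref{thm-hsect-holim} applied to the constant Quillen presheaf with fiber $\Ch$, by restricting both sides of the resulting quasi-equivalence to the full sub-dg-categories on objects whose fibers at every $i$ are perfect.

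Recall from Section \ref{sect-dgstrict} that Theorem \ref{thm-hsect-holim} is proved through a zigzag of homotopy $\cat V$-embeddings
\[ L\hsect(I,\Ch) \xrightarrow{\rho_1} L\psect(I,\cat V Psh(RL\Ch)) \xleftarrow{\rho_2} \holim_{I\op} \Ch \]
whose essential images coincide, by Lemmas \ref{lemma-image-rho1} and \ref{lemma-image-rho2}, with the homotopy cartesian sections whose components lie in the Yoneda image of $\Ch$. As a preliminary step I would interpret $\holim_{I\op}\Chp$ as the homotopy limit in $\dgCat_{DK}$; since $\Chp \hookrightarrow \Ch$ is fully faithful and closed under quasi-isomorphism, this identifies $\holim_{I\op}\Chp$ with the full sub-dg-category of $\holim_{I\op}\Ch$ on those objects $a$ for which every projection $\pi_i(a)$ lies in $\Chp$.

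I would then verify that the condition ``all fibers are perfect'' transports along the zigzag compatibly on both sides. The embedding $\rho_2$ sends $a$ to the section $\{y(\pi_i(a))\}$, where $y$ denotes the Yoneda embedding, so its image has components in the Yoneda image of $\Chp$ iff every $\pi_i(a) \in \Chp$. Similarly $\rho_1$ applies Yoneda componentwise, sending $(X_i, \phi_f)$ to a section with components $y(X_i)$, which land in the Yoneda image of $\Chp$ iff $X_\bullet \in L\hsect(I, \Chp)$. Restricting both embeddings to these perfect-fibered sub-dg-categories therefore yields homotopy embeddings with matching essential images into the corresponding full sub-dg-category of $L\psect(I,\cat V Psh(RL\Ch))$, hence a zigzag of quasi-equivalences $L\hsect(I,\Chp) \simeq \holim_{I\op}\Chp$ as desired.

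The main obstacle is the preliminary identification of $\holim_{I\op}\Chp$ as a full sub-dg-category of $\holim_{I\op}\Ch$, since $\Chp$ is not a model category and so Theorem \ref{thm-hsect-holim} does not apply to it directly. The key point is that closure of $\Chp \subset \Ch$ under quasi-isomorphism makes the inclusion a homotopy monomorphism in $\dgCat_{DK}$, so that homotopy limits of the inclusion cut out precisely the objects whose fiber at each $i$ satisfies the subcategory condition; equivalently, one checks that $\rho_2$ factors through $\holim_{I\op}\Chp$ exactly on the perfect-fibered subcategory. Once this identification is in place the restriction argument above is routine.
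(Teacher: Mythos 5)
Your overall strategy---restricting the strictification zigzag of Theorem \ref{thm-hsect-holim} to the perfect-fibered objects on both sides---is the same as the paper's, and your treatment of the $\rho_1$ side is essentially complete: given a homotopy cartesian section with fibers in the image of $\Chp$, Lemma \ref{lemma-image-rho1} produces a preimage in $L\hsect(I,\Ch)$ whose components are quasi-isomorphic to perfect complexes and hence lie in $\Chp$, since $\Chp$ is closed under quasi-isomorphism. The gap is in what you yourself flag as the main obstacle: the identification of $\holim_{I\op}\Chp$ with the full sub-dg-category of $\holim_{I\op}\Ch$ on objects with perfect fibers. You justify this by asserting that $\Chp \hookrightarrow \Ch$ is a homotopy monomorphism and that homotopy limits of such inclusions ``cut out precisely the objects whose fiber at each $i$ satisfies the subcategory condition.'' That statement is exactly what has to be proved, and in the model-categorical framework used here it is not formal: $\holim_{I\op}\Chp$ is computed as $\lim RL\uChp$ for a fibrant replacement of the constant diagram with fiber $\Chp$, and an object of $\lim RL\underline\Ch$ whose fibers are levelwise perfect only gives, a priori, objects of each $(RL\uChp)_i$ with no strictly compatible system among them.

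The missing argument is the inductive lifting that the paper carries out: one uses the natural comparison map $RL\uChp \ra RL\underline\Ch$ supplied by functoriality of fibrant replacement, together with the fact that the matching maps $(RL\uChp)_{i} \ra M_{i}(RL\uChp)$ are fibrations of dg-categories, to lift---degree by degree over the direct category $I$---a homotopy cartesian section with fibers in the image of $\Chp$ to an honest object of $\lim RL\uChp$. Without this induction (or an explicit appeal to, and verification of, an $\infty$-categorical statement that limits along levelwise fully faithful, equivalence-closed inclusions have the expected essential image, combined with the identification of the model-categorical homotopy limit with that $\infty$-categorical limit), the ``if'' direction of your claim that $\rho_2$ factors through $\holim_{I\op}\Chp$ exactly on the perfect-fibered subcategory is unproved. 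Supplying that step would complete your argument, and would essentially reproduce the paper's proof.
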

\begin{rk} Note that this is not the subcategory of perfect objects in $L\hsect(I, \Ch)$.
\end{rk}
\begin{proof}
Considering the proof of strictification 
we aim to show that $L\hsect(I, \Chp)$ and $\holim_{I\op} \Chp$ can be identified with the subcategory of objects $X_{\bullet} \in L\hsect(I, \cat V Psh(RL\Ch_{\bullet}))$ such that every $X_{i}$ is in the image of $\Chp$.

For $L\hsect(I, \Chp)$ this is immediate from the proof of Lemma \ref{lemma-image-rho1}, as Lemma 6.6 in \cite{Spitzweck10}: 
One
inductively picks $Y'_{i} \simeq X_{i}$ and replaces them by weakly equivalent $Y_{i}$ which form a homtopy Cartesian section. If
$X_{i} \in Im(\Chp)$ then $Y'_{i}$ will also be perfect, ensuring every $Y_{i}$ is the image of a compact object. 

Now we consider Lemma \ref{lemma-image-rho2} and the construction of an object $Y_{\bullet}$ in the homotopy limit that maps to a given homotopy Cartesian section $X_{\bullet}$. 
The proof of the lemma proceeds by lifting $Y_{<n} \in M_{i}(RL\uChp)$ to $Y_{i} \in (RL\uChp)_{i}$ using the quasi-isomorphism $X_{<n} \simeq Y_{<n}$ and the fact that $(RL\uChp)_{i} \ra M_{i}(RL\uChp)$ is a fibration. By assumption $X_{i} \in \uChp$.
But there is a natural map between fibrant diagrams $RL\uChp \ra RL\underline \Ch$ through which $\uChp \ra RL\underline \Ch$ factors by functoriality of fibrant replacement. So from $Y_{<i} \in M_{i}(RL\uChp)$ we can inductively construct $Y_{\bullet}$ such that the $Y_{i}$ live in $(RL\uChp)_{i}$. 
\end{proof}

\begin{thm}\label{thm-holim-hsect} .
$\cat H^M(X) \simeq L\hsect(I_0, \uChp)$ for any locally finite good hypercover $\{U_i\}_{i \in I}$ of $X$ where $I_{0} \subset I$ is the subcategory of non-degenerate objects.
\end{thm}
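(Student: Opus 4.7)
The plan is to observe that this theorem is essentially a packaging of results already established in the paper, namely the identification of $\cat H^{M}(X)$ with a small homotopy limit from \cite{HolsteinA} together with Proposition \ref{propn-perfect-fibers}, which does the heavy lifting of the strictification in the presence of the non-model-category $\Chp$.

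First I would recall from the discussion under ``Morita Cohomology'' in the introduction that, for a good hypercover $\{U_{i}\}_{i \in I}$ of $X$, one has
\[
\cat H^{M}(X) \simeq \holim_{I\op} \Chp \simeq \holim_{I_{0}\op} \Chp,
\]
where $\Chp$ is viewed as the constant diagram and $I_{0} \subset I$ is the full subcategory of non-degenerate objects. The reduction from $I\op$ to $I_{0}\op$ is the one recorded in \cite{HolsteinA}. Under the bounded locally finite good hypercover assumption, $I_{0}$ acquires the degree function from the simplicial level, so that $I_{0}$ is inverse and $I_{0}\op$ is direct; this is precisely the hypothesis required in Section \ref{sect-dgstrict}.

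Next I would apply Proposition \ref{propn-perfect-fibers} to the index category $I = I_{0}$, yielding a quasi-equivalence
\[
\holim_{I_{0}\op} \Chp \simeq L\hsect(I_{0}, \Chp),
\]
where $L\hsect(I_{0}, \Chp)$ is, by definition, the subcategory of $L\hsect(I_{0}, \Ch)$ on sections all of whose fibers lie in $\Chp$. Since $\uChp$ is by definition the constant left Quillen presheaf with fiber $\Chp$ and identity transition maps, the categories $L\hsect(I_{0}, \Chp)$ and $L\hsect(I_{0}, \uChp)$ coincide. Concatenating the quasi-equivalences then gives
\[
\cat H^{M}(X) \simeq \holim_{I_{0}\op} \Chp \simeq L\hsect(I_{0}, \Chp) = L\hsect(I_{0}, \uChp),
\]
which is the claim.

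There is not really a main obstacle left: the delicate point is already addressed in Proposition \ref{propn-perfect-fibers}, namely that $\Chp$ is not itself a model category, so Theorem \ref{thm-hsect-holim} cannot be applied directly, and one must instead verify by the inductive argument on $\deg(i)$ that both embeddings $\rho_{1}$ and $\rho_{2}$ from Section \ref{sect-dgstrict} restrict to the subcategories of sections with $X_{i} \in \Chp$. The only thing to check here is that this proposition can genuinely be applied with $I = I_{0}$, which is automatic because $I_{0}\op$ is direct under the bounded locally finite hypothesis.
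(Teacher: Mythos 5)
Your proposal is correct and follows essentially the same route as the paper: the paper's proof is exactly the concatenation of the identification $\cat H^M(X) \simeq \holim_{I_0\op}\Chp$ from \cite{HolsteinA} with Proposition \ref{propn-perfect-fibers} applied to the strictification Theorem \ref{thm-hsect-holim}. Your additional remark that $I_0$ is inverse (so $I_0\op$ is direct) under the bounded locally finite hypothesis is precisely the point that licenses applying these results to $I_0$.
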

\begin{proof}

We apply Proposition \ref{propn-perfect-fibers} to Theorem \ref{thm-hsect-holim} and recall Theorem 16 
of \cite{HolsteinA}.
\end{proof}

\section{Homotopy locally constant sheaves}\label{sect-hlc}

Theorem \ref{thm-holim-hsect}
is just a precise way of saying that an object of $\cat H^M(X)$ is given by a collection of chain complexes, one for every open set in the cover, with quasi-isomorphic transition function. We will now turn this into an equivalence with the dg-category of homotopy locally constant hypersheaves of perfect chain complexes. 

To define homotopy locally constant sheaves we put the local model structure (as it is described for example in Section 3.1 
of \cite{HolsteinA}) on presheaves of chain complexes over $k$ on $X$. 
In particular the fibrant objects are exactly objectwise fibrant hypersheaves.

\begin{defn}
We call \emph{homotopy locally constant} a presheaf $\cat F$ such that there is a cover $U_i$ such that all the restrictions $\cat F|_{U_{i}}$ are weakly equivalent to constant sheaves. (In particular the
transition functions between $\cat F(U_i)|_{U_{ij}}$ and $\cat F(U_j)|_{U_{ij}}$ are weak equivalences.)

Then we 
denote by $LC_{H}(X)$ the subcategory of homotopy locally constant hypersheaves of perfect chain complexes. 
This is a dg-category and the hom-spaces are derived hom-spaces of complexes of sheaves. 
\end{defn}

Note that $LC_{H}(X)$ consists of fibrant cofibrant presheaves of chain complexes. It is quasi-equivalent to the category of homotopy locally constant sheaves of perfect complexes on $X$; restricting to fibrant objects simplifies our exposition.

\begin{rk}
The homology sheaves of a homotopy locally constant sheaf are finite dimensional vector bundles which have isomorphisms as transition functions with respect to the above cover, i.e.\ they form local systems. 
\end{rk}

\begin{propn}
Let $X$ be a topological
space with a locally finite good hypercover $\mathfrak U$ and let $I_{0}$ index the nondegnerate connected open sets.
There is a restriction functor from $LC_{H}(X)$ to $L\hsect(I_{0}, \Chp)$ that is quasi-essentially surjective. 
\end{propn}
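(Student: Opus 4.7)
The plan is to define the restriction functor $\cat F \mapsto (\cat F(U_i))_{i \in I_0}$ with structure maps induced by the restrictions of $\cat F$, verify it lands in $L\hsect(I_0, \Chp)$, and then establish quasi-essential surjectivity by a Kan extension construction.

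For the first part, I would establish a ``triviality on contractibles'' lemma: for any $\cat F \in LC_H(X)$ and contractible open $U \subseteq X$, the restriction $\cat F|_U$ is weakly equivalent to a constant hypersheaf with value a single perfect chain complex. This can be argued by combining hyperdescent with the local triviality witnessing the homotopy locally constant property, or equivalently by invoking $\cat H^M(U) \simeq \Chp$ from \cite{HolsteinA} for $U$ contractible, together with the identification of $\cat H^M$ with $L\hsect$ established earlier in the paper. Granted this lemma, each $\cat F(U_i)$ is perfect, and each restriction $\cat F(U_i) \to \cat F(U_j)$ for $U_j \subseteq U_i$ in $I_0$ is a quasi-isomorphism. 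Since the Quillen presheaf on $I_0$ is constant, the section's comparison maps are exactly these restriction maps, so the section is homotopy cartesian.

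For quasi-essential surjectivity, given a homotopy cartesian section $(X_i)_{i \in I_0}$, I would first extend it from $I_0$ to all of $I$ (so degenerate opens carry the value of the underlying nondegenerate index), then right Kan extend to arbitrary opens via $\widetilde{\cat F}(V) := \holim_{U_i \subseteq V} X_i$, and take the fibrant replacement $\cat F$ in the local hypersheaf model structure on presheaves of chain complexes. The main obstacle is verifying (a) that $\cat F(U_i) \simeq X_i$, which uses that $U_i$ is essentially terminal in its own over-category inside $I$ so that the Kan-extension homotopy limit at $U_i$ simplifies to $X_i$, combined with the fact that hypersheaf fibrant replacement does not alter values on the $U_i$ because the homotopy cartesian property of the section already encodes hyperdescent relative to $\mathfrak U$; and (b) that $\cat F \in LC_H(X)$, for which the cover $\mathfrak U$ itself witnesses local constancy, since on each contractible $U_i$ the cartesian property together with contractibility forces $\cat F|_{U_i}$ to be weakly equivalent to the constant sheaf with value $X_i$, and each fiber is perfect by construction. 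Making the interplay between right Kan extension, hypersheafification, and the combinatorics of the bounded locally finite hypercover rigorous --- in particular checking that the fibrant replacement step preserves both the values on $\mathfrak U$ and the local constancy property --- is where the bulk of the technical work lies.
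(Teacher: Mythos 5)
Your first half (the definition of $r$ and the check that it lands in $L\hsect(I_0,\Chp)$) matches the paper, and your overall strategy for surjectivity --- build a presheaf from the section by a homotopy right Kan extension, then fibrantly replace in the local model structure --- is also the paper's strategy. But there is a genuine gap in the way you index the Kan extension. You set $\widetilde{\cat F}(V) := \holim_{U_i \subseteq V} X_i$, i.e.\ you right Kan extend along the inclusion of the hypercover into $Op(X)$. The elements of $\mathfrak U$ are \emph{not} a basis for the topology of $X$: a small open $V$ (say, a ball contained in a deep intersection of cover elements) need not contain any $U_i$ at all, so the indexing category is empty and $\widetilde{\cat F}(V)$ is the limit over the empty diagram, i.e.\ $0$. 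Since every point has arbitrarily small such neighbourhoods, all stalks of $\widetilde{\cat F}$ vanish, and hypersheafification then produces the zero sheaf --- so neither (a) nor (b) can be rescued afterwards. The fibrant replacement cannot repair values that are already wrong on a cofinal system of small opens.

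The paper avoids this by interposing a basis: it chooses a basis $\mathfrak B$ of contractible opens \emph{subordinate} to $\mathfrak U$ (every $B\in\mathfrak B$ is contained in every $U\in\mathfrak U$ it meets; this uses local finiteness), defines $S^{\mathfrak B}(A)(B) = A_U$ for $U$ the minimal element of $\mathfrak U$ containing $B$, and only then right Kan extends from $\mathfrak B$ to all opens and fibrantly replaces. Because $\mathfrak B$ is a basis, every open contains basis elements and the stalks come out right; the subordination and minimality are exactly what make $B\mapsto A_U$ well defined and force $S^p(A)|_U \simeq A_U$ for $U\in\mathfrak U$. You would need to insert this intermediate step (or an equivalent device guaranteeing correct values on arbitrarily small opens) for your construction to work. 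Separately, your claim that the fibrant replacement does not change the values on the $U_i$ also needs the argument the paper gives --- passing to homology sheaves, which are constant on each $U_i$, and comparing with stalks --- rather than an appeal to the cartesian property alone.
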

\begin{proof}
There is an obvious functor $r\colon  LC_{H}(X) \ra L\hsect(I_{0}, \Chp)$ sending a hypersheaf $\cat F$ to $i \mapsto \cat F(U_i)$. (If $\cat F$ is fibrant cofibrant in the local model structure it is fibrant cofibrant in the injective model structure.) We show that $r$ is quasi-essentially surjective by producing a left inverse in the homotopy category. 

Let $\mathfrak U$ also denote the category of all connected 
open sets making up the hypercover $\mathfrak U$. Pick a basis $\mathfrak B$ of contractible sets for the topology of $X$ and assume it is subordinate to $\mathfrak U$ in the sense that any $B \in \mathfrak B$ is contained in any $U \in \mathfrak U$ it intersects. This is possible since $\mathfrak U$ is locally finite.
Consider the presheaf $S^{\mathfrak B}(A)$ on $\mathfrak B$ that sends $B$ to $A_{U}$ where $U$ is minimal containing $B$, such $U$ exists by our assumptions. 
Extend $S^{\mathfrak B}(A)$ to a presheaf $S^{p}(A)$ on $X$ by $S^{p}(A)(W) = \holim_{C \subset W} S^{\mathfrak B}(A)(C)$.
Let $S(A)$ denote a functorial fibrant and cofibrant replacement of $S^{p}(A)$ (in particular it is a sheafification).
Now if we restrict $S^p(A)$ to $U \in \mathfrak U$ there is an obvious weak equivalence with the constant presheaf $A_{U}$, via $S^{p}(B) \simeq A_{U'} \simeq A_{U}$ if $B \subset U' \subset U$. Hence $S(A)$ is a homotopy locally constant sheaf. 

To show that $S(A)(U) \simeq A_{U}$ we can take homology and since the homology sheaves are constant on $U$ the canonical map to the stalk at any point of $U$ is a weak equivalence. (The value at the stalk is weakly equivalent to the limit of the constant diagram $A_{U}$.) 

Hence $r \circ S \simeq \id$ and $r$ is indeed quasi-essentially surjective.
\end{proof}

The following lemma is well-known. We sketch a proof for lack of a reference.

\begin{lemma}\label{lemma-tot-holim}
Let $X_{\bullet}$ be a cosimplicial diagram of chain complexes. Then $\holim X_{\bullet} \simeq \Tot^{\prod} X_{\bullet}$.
\end{lemma}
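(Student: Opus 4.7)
The plan is to compute $\holim X_\bullet$ via the Bousfield--Kan end formula and then identify that end with the product total complex. The homotopy limit of a cosimplicial diagram in $\Ch$ can be computed as the end
\[ \holim_{\De} X_\bullet \; \simeq \; \anend_{n \in \De} \uHom\bigl(N_*(\De[n]), X^n\bigr), \]
where $N_*(\De[n])$ denotes the normalized chain complex of the standard $n$-simplex. This is the standard formula for homotopy limits in a model category cotensored over $\Ch$, using that the cotensor of a chain complex by a simplicial set $K$ is $\uHom(N_*(K), -)$.

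Next I would unwind the end. Since $N_*(\De[n])$ is free of finite rank with its unique top generator in degree $n$, a chain map $N_*(\De[n]) \to X^n[p]$ is determined by the image of the top nondegenerate simplex, an element of $X^n_{p+n}$, together with the constraint that its $d$-image is the alternating sum of cofaces applied to the images of the $(n-1)$-faces. Imposing the end relations across all $n$ and all morphisms in $\De$ identifies the end in degree $p$ with the product $\prod_n X^n_{p+n}$, subject to vanishing on codegeneracies; the total differential is the sum of the internal differential and $\sum_i (-1)^i \partial^i$. In other words, the end is the product total complex of the normalized cosimplicial chain complex $N X_\bullet$.

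Finally, the inclusion of the normalized total complex into $\Tot^\prod X_\bullet$ is a quasi-isomorphism by the cosimplicial analogue of the Dold--Kan normalization theorem: the degenerate subcomplex, i.e.\ the one generated by images of codegeneracies, is acyclic, so normalization induces a quasi-isomorphism on the product totalizations as well. Composing the two quasi-isomorphisms yields the asserted equivalence $\holim X_\bullet \simeq \Tot^\prod X_\bullet$.

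The main obstacle is the combinatorial bookkeeping in the middle step: matching the sign conventions and degree shifts in the end formula with those defining $\Tot^\prod$, and identifying the end relations precisely with normalization plus the alternating coface differential. Once this is done, the first and third steps are standard.
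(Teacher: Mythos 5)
Your argument is correct, and it takes a genuinely different route from the paper's. The paper first applies the cosimplicial Dold--Kan correspondence to replace the $\De$-diagram by a nonnegatively graded cochain complex of chain complexes, observes that this equivalence preserves levelwise quasi-isomorphisms (hence identifies the two homotopy limits), and then asserts that the homotopy limit of such a complex is $\Tot^{\prod}$ as a generalized cone construction. You instead compute $\holim$ directly from the cotensor (Bousfield--Kan) end formula $\anend_{n}\uHom(N_*(\De[n]),X^n)$ and invoke Dold--Kan normalization only at the end, to pass from the conormalized to the unnormalized total complex. Your route buys an explicit model with signs and degree shifts visible, at the cost of the combinatorial bookkeeping you acknowledge; the paper's route is shorter but hides the computation inside the claim that $\holim$ of a cochain complex of chain complexes is $\Tot^{\prod}$. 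Two points you should make explicit to close the argument. First, the end formula computes the homotopy limit only when $X^\bullet$ is Reedy fibrant (and $n\mapsto N_*(\De[n])$ is Reedy cofibrant, which it is since the latching maps have cokernel $k$ placed in a single degree); Reedy fibrancy is automatic here because the cosimplicial Dold--Kan splitting $X^n\cong\bigoplus_{[n]\twoheadrightarrow[m]}N X^m$ exhibits every matching map as a split epimorphism, hence a fibration of chain complexes. Second, in the last step mere acyclicity of the degenerate subcomplex is not enough: for $\Tot^{\prod}$ of a double complex unbounded in the chain direction, the column-filtration spectral sequence need not converge, so ``columns acyclic'' does not imply the product totalization is acyclic. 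What the normalization theorem actually provides is a natural chain contraction of the degenerate part; since $\Tot^{\prod}$ is an additive functor it preserves chain homotopies, so $\Tot^{\prod}NX\hookrightarrow\Tot^{\prod}X$ is a chain homotopy equivalence, which is the statement you need.
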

\begin{proof}
Note that $\Ch$ is an abelian category 
so there is an equivalence of categories between cosimplicial objects in $\Ch$ and nonpositive chain complexes in $\Ch$, we can write this as $\Ch^{\De} \cong \Ch^{\set N}$. Now note that the Dold--Kan correspondence respects levelwise quasi-isomorphisms. (To see the normalized chain functor preserves quasi-isomorphisms consider the splitting of the Moore complex $M(A) = N(A) \oplus D(A)$.) 

It follows that the associated categories with weak equivalences and hence the homotopy categories $Ho(\Ch^{\De})$ and $Ho(\Ch^{\set N})$ are equivalent and the homotopy limit of the cosimplicial diagram is the homotopy limit of the corresponding $\set N$-diagram. 

But taking the homotopy limit of a complex of chain complexes is just taking the product total complex. If the complex is concentrated in two degrees this is the well-known cone construction, which generalizes in the obvious way. \end{proof}
\begin{rk}
It is worth pointing out that while this is an ad-hoc construction there is a complete Dold--Kan theorem for stable $(\oo,1)$-categories in Section 1.2 of \cite{Lurie11}.
\end{rk}

\begin{propn}\label{propn-hom-check}
In the setting of the previous proposition, for $A_\bullet, B_\bullet \in L\hsect(I, \Chp)$ we have: 
\[\uHom_{L\hsect(I, \Chp)}(A_\bullet, B_\bullet) \simeq
\uHom_{LC_{H}(X)}(S(A),S(B))\]
In particular the cohomology groups of $\uHom(A_\bullet, B_\bullet)$ are the Ext groups of $S(A)$ and $S(B)$.
\end{propn}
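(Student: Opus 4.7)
The plan is to identify both hom-spaces with the homotopy limit $\holim_{I_{0}\op} \uHom(A_{i}, B_{i})$ computed in $\Ch$, and then concatenate the resulting equivalences.

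For the sheaf side I would use that in the local model structure on presheaves of chain complexes the derived hom-space between two fibrant hypersheaves is computed as derived global sections of the derived internal Hom sheaf:
\[\uHom_{LC_{H}(X)}(S(A), S(B)) \simeq R\Gamma\bigl(X,\, R\underline{\uHom}(S(A), S(B))\bigr).\]
By the construction of $S$ recalled in the previous proposition, $R\underline{\uHom}(S(A), S(B))$ restricted to each contractible $U_{i} \in \mathfrak U$ is weakly equivalent to the constant sheaf $\underline{\uHom(A_{i}, B_{i})}$, whose derived global sections on $U_{i}$ are $\uHom(A_{i}, B_{i})$. Applying hyperdescent along the hypercover $\mathfrak U$ and invoking Lemma \ref{lemma-tot-holim} to turn the resulting $\Tot^{\prod}$ of the associated cosimplicial object into a homotopy limit will then identify the sheaf side with $\holim_{I_{0}\op} \uHom(A_{i}, B_{i})$.

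For the section side I would start from the fact that the dg hom-space in $L\hsect(I_{0}, \Chp)$ is the derived hom in the $I_{0}\op$-diagram category with its injective model structure, computed as the homotopy end $\int^{h}_{i \in I_{0}} \uHom(A_{i}, B_{i})$. Since $A_{\bullet}$ and $B_{\bullet}$ are homotopy cartesian, all structure maps in the diagram $i \mapsto \uHom(A_{i}, B_{i})$ are weak equivalences, so this diagram is essentially constant. For essentially constant diagrams over the inverse category $I_{0}$ one then identifies the homotopy end with the ordinary homotopy limit $\holim_{I_{0}\op} \uHom(A_{i}, B_{i})$; I would verify this by induction along the degree filtration of $I_{0}$, showing that at each stage the matching object data involved in the end reduce up to weak equivalence to those for the ordinary limit.

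Combining the two resulting quasi-isomorphisms gives the asserted equivalence, and the claim about Ext groups then follows by taking cohomology. The hard part will be the last identification on the section side: the homotopy end and the ordinary homotopy limit genuinely differ for non-cartesian diagrams, so the argument must make essential use of the homotopy cartesian hypothesis to collapse the twisted-arrow structure of the end to a homotopy limit over $I_{0}\op$ itself.
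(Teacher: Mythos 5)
Your overall strategy---identifying both sides with a common homotopy limit of the complexes $\uHom(A_i,B_i)$---is the same as the paper's, and your treatment of the sheaf side (hyperdescent along the good hypercover, contractibility of the $U_i$ to replace sections of the derived internal Hom by $\uHom(A_i,B_i)$, and Lemma \ref{lemma-tot-holim} to convert $\Tot^{\prod}$ into a homotopy limit) is essentially the paper's \v Cech-complex argument in different words. The gap is on the section side, and it sits exactly where you yourself locate ``the hard part'': the expression $\holim_{I_{0}\op}\uHom(A_i,B_i)$ is not yet a well-defined object, because $i\mapsto\uHom(A_i,B_i)$ has mixed variance --- a morphism $f\colon i\to j$ only gives a cospan $\uHom(A_i,B_i)\to\uHom(A_j,B_i)\leftarrow\uHom(A_j,B_j)$, not a map between the two hom-complexes. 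Observing that the wrong-way legs are weak equivalences (by homotopy cartesianness) tells you the zigzags are invertible up to homotopy, but it does not by itself produce the honest $I$-diagram whose homotopy limit you want to compare with the homotopy end; some strictification of these zigzags is required before any induction on degree can start.

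The paper's device for this is Lemma 3.1 of Spitzweck: the hom-space in $L\hsect(I,\Chp)$ is the homotopy limit over $i\in I$ of the genuine diagram $i\mapsto\uHom_{L\hsect(I/i,\Chp)}(A|_{I/i},B|_{I/i})$, with comparison maps induced by inclusions of overcategories, and each term is weakly equivalent to $\uHom(A_i,B_i)$ precisely because the sections are homotopy cartesian. Your proposed induction along the degree filtration is morally a proof of that lemma, so the plan is salvageable, but as written it is a placeholder for the main step rather than an argument. A second omission: identifying the two sides requires more than matching the objects of the two homotopy limit diagrams; you must also check that the comparison maps agree (restriction of sheaf homs on the right versus the maps induced by the section structure on the left). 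The paper devotes its final paragraph to exactly this point, and your proposal does not address it.
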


\begin{proof}
We know that the right hand side can be computed as a \v Cech complex of the good hypercover. 
(See for examples the section Hypercoverings in \cite{Stacks13}.) 
It remains to show that the left-hand side is quasi-isomorphic to $\check C^{*}_{\mathfrak U}(\Hom(S(A), S(B)))$. 
The \v Cech complex is the total complex, 
and hence by Lemma \ref{lemma-tot-holim} the homotopy limit, of the cosimplicial diagram  
\[n \mapsto  \uHom(A_{U_{n}}, B_{U_{n}}) \coloneqq \prod_{i \in I_{n}} \uHom(A_{U_{n}^{i}}, B_{U_{n}^{i}})\] which is in turn equal to the homotopy limit of the diagram $i \mapsto \uHom(A_{U_{n}^{i}}, B_{U_n^{i}})$.
Here we can replace $\uHom_{\cat D}(S(A)(U_{n}^{i}), S(B)(U_{n}^{i}))$ by $\uHom(A_{U_{n}^{i}}, B_{U_{n}^{i}})$ as the $U_{n}^{i}$ are contractible.

By adapting Lemma 2 
of \cite{HolsteinA} to presections one sees that 
the derived functor of $\int \uHom(A_{\bullet}, B_{\bullet})$ is given by $\uHom_{L\hsect}$ between a cofibrant replacement of $A_{\bullet}$ and a fibrant replacement of $B_{\bullet}$ in the injective model category structure. In other words, since all objects in $L\hsect$ are assumed 
 fibrant and cofibrant, $\uHom_{L\hsect}(A_{\bullet}, B_{\bullet})$ is already the derived functor of $\int \uHom(A_{\bullet}, B_{\bullet})$.

Now, adapting Lemma 3.1 of \cite{Spitzweck10} to dg-model categories, we can also compute hom-spaces in $L\hsect$ as a homotopy limit of the diagram $i \mapsto \uHom_{L\hsect(I/i, \uChp)}(A|_{(I/i)}, B|_{(I/i)})$ where the comparison maps are induced by the inclusion of diagrams. 
The underived version follows from a diagram chase comparing the end and the limit of ends, and both sides give fibrant diagrams since $A_{\bullet}$ and $B_{\bullet}$ are fibrant cofibrant. 

By 3.1 of \cite{Spitzweck10} again $\uHom_{L\hsect(I/i, \uChp)}(A|_{(I/i)}, B|_{(I/i)})$ is weakly equivalent to $\uHom_{\Chp}(A_{U_{n}^{i}}, B_{U_{n}^{i}})$. So the objects in the diagrams on the left-hand side and the right-hand side agree.

It remains to show that the comparison maps on the left-hand side correspond to the restriction map of sheaf homs on the right-hand side. Note that giving a sheaf Hom from $S(A)(U)$ to $S(B)(U)$ corresponds to giving morphisms $S(A)(W) \ra S(B)(W)$ for all $W \subset U$, so giving a morphisms of presections in the overcategory $Op(X)\op/U$. But when applying the weak equivalence with $A_{U}$
the only non-identity restrictions 
come from the fixed cover $\mathfrak U$ 
and we can take the limit over $\mathfrak U\op/U$ and obtain the same expression we have on the left-hand side. 

Hence the two homotopy limits agree and the enriched hom-space is weakly equivalent to the \v Cech complex.
\end{proof}

Summing up we have proven: 
\begin{thm}\label{thm-morita-hsect}
Let $X$ be a topological space with a bounded locally finite good hypercover. 
Then $\cat H^M(X)$ is quasi-equivalent to the dg-category $LC_H(X)$.
\end{thm}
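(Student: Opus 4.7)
The plan is straightforward once the preceding results are in hand: both $\cat H^M(X)$ and $LC_H(X)$ are quasi-equivalent to $L\hsect(I_0, \uChp)$, and what remains is to assemble these comparisons. Theorem \ref{thm-holim-hsect} already supplies one half, $\cat H^M(X) \simeq L\hsect(I_0, \uChp)$, so the task reduces to upgrading the restriction functor $r\colon LC_H(X) \ra L\hsect(I_0, \uChp)$ from the previous proposition to a quasi-equivalence.

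First I would recall that $r$ is quasi-essentially surjective: this is exactly the content of the restriction proposition, where the sheafification-and-extension construction $A \mapsto S(A)$ provides a section of $r$ up to weak equivalence, so that every object of $L\hsect(I_0, \uChp)$ lies in the essential image of $r$.

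Next I would verify that $r$ is quasi-fully faithful. This is supplied by Proposition \ref{propn-hom-check}, which establishes the natural quasi-isomorphism $\uHom_{LC_H(X)}(S(A), S(B)) \simeq \uHom_{L\hsect}(A_\bullet, B_\bullet)$. Combined with quasi-essential surjectivity, this upgrades to quasi-isomorphisms on all Hom-complexes between arbitrary objects of $LC_H(X)$: every such object is equivalent to one of the form $S(A)$ with $r(S(A)) \simeq A_\bullet$, and the comparison Proposition \ref{propn-hom-check} is natural enough to transport along these equivalences.

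Composing $r$ with the quasi-equivalence of Theorem \ref{thm-holim-hsect} then delivers the desired quasi-equivalence $LC_H(X) \simeq \cat H^M(X)$. The main, and in fact only, obstacle in this assembly is the identification in Proposition \ref{propn-hom-check} of the \v Cech computation of derived sheaf Hom with the homotopy end of internal Homs of sections, but that step has already been carried out, so the final theorem is genuinely a corollary of the preceding work.
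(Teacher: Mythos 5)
Your proposal matches the paper's own (very brief) argument: the theorem is obtained exactly by combining Theorem \ref{thm-holim-hsect} with the quasi-essential surjectivity of the restriction functor $r$ and the hom-space comparison of Proposition \ref{propn-hom-check}. The only detail you add beyond what the paper states explicitly --- that full faithfulness on objects of the form $S(A)$ transports to arbitrary objects of $LC_H(X)$ because every homotopy locally constant hypersheaf is equivalent to one in the image of $S$ --- is a correct and welcome clarification.
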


The corresponding results also hold if the fiber is $\Ch$.

\begin{rk}
With this interpretation the natural induced map $f^{*}\colon  \cat H^M(Y) \ra \cat H^M(X)$ in Morita cohomology corresponds to the pull-back map of complexes of sheaves.

Since pushforwards of homotopy locally constant sheaves are not homotopy locally constant it is clear that we do not in general expect a map $f_*$ or $f_!$ going in the other direction.
\end{rk}

\begin{rk}
There is an interesting duality between $C^{*}(X)$ and chains on the based loop space $C_{*}(\Om X)$, cf. \cite{Dwyer05}. 
It is well known that $R\uHom_{C_{*}\Om X}(k, k) \simeq C^{*}(X, k)$. We can now  interpret this as saying that the cohomology of $k$ as a $C_{*}(\Om X)$-representation and as a constant sheaf on $X$ agree, and in fact this is a direct consequence 
 of our results characterizing $\cat H^{M}(X)$ as homotopy locally constant sheaves and as $C_{*}(\Om X)$-representations (see \cite{HolsteinA}). 

Conversely if $X$ is simply connected, $k$ is a field and all homology groups are finite dimensional over $k$ it is true that $R\uHom_{{C^{*}(X, k)}}(k,k) \simeq C_{*}(\Om X)$.
It would be interesting to have a similar interpretation of $C^{*}(X)$-modules where it is clear that endomorphisms of $k$ are given by $C_{*}(\Om X)$. 
\end{rk}

\bibliography{../biblibrary}

\begin{thebibliography}{10}

\bibitem{Barwick07}
{\sc C.~Barwick}, {\em {On (enriched) left Bousfield localization of model
  categories}}, Arxiv e-prints,  (2007), 0708.2067.

\bibitem{Barwick07a}
\leavevmode\vrule height 2pt depth -1.6pt width 23pt, {\em {On the dreaded
  right Bousfield localization}}, ArXiv e-prints,  (2007), 0708.343.

\bibitem{Barwick10}
\leavevmode\vrule height 2pt depth -1.6pt width 23pt, {\em {On left and right
  model categories and left and right Bousfield localizations}}, Homology
  Homotopy Appl, 12 (2010), pp.~245--320.

\bibitem{Bergner08}
{\sc J.~E. Bergner}, {\em {Homotopy fiber products of homotopy theories}},
  ArXiv e-prints,  (2008), 0811.3175.

\bibitem{Bergner10}
\leavevmode\vrule height 2pt depth -1.6pt width 23pt, {\em {Homotopy limits of
  model categories and more general homotopy theories}}, ArXiv e-prints,
  (2010), 1010.0717.

\bibitem{Dwyer05}
{\sc W.~G. {Dwyer}, J.~P.~C. {Greenlees}, and S.~{Iyengar}}, {\em {Duality in
  algebra and topology}}, ArXiv e-prints,  (2005), 0510247.

\bibitem{Dwyer89}
{\sc W.~G. {Dwyer}, D.~M. Kan, and J.~H. {Smith}}, {\em {Homotopy commutative
  diagrams and their realizations}}, Journal of Pure and Applied Algebra, 57
  (1989), pp.~5--24.

\bibitem{Fritsch90}
{\sc R.~Fritsch and R.~Piccinini}, {\em Cellular structures in topology},
  Cambridge Studies in Advanced Mathematics, Cambridge University Press, 1990.

\bibitem{Simpson01}
{\sc A.~Hirschowitz and C.~Simpson}, {\em {Descente pour les n-champs}}, Arxiv
  e-prints,  (2001), math/9807049.

\bibitem{Holstein0}
{\sc J.~Holstein}, {\em Morita Cohomology}, PhD thesis, University of
  Cambridge, 2013.

\bibitem{HolsteinA}
\leavevmode\vrule height 2pt depth -1.6pt width 23pt, {\em {Morita
  Cohomology}}.
\newblock Preprint, 2014.

\bibitem{Kelly82}
{\sc G.~Kelly}, {\em Basic Concepts of Enriched Category Theory}, London
  Mathematical Society lecture note series, 64, Cambridge University Press,
  1982.

\bibitem{Lurie11}
{\sc J.~Lurie}, {\em {Higher Algebra}}.
\newblock Available at www.math.harvard.edu/~lurie/papers/higheralgebra.pdf,
  Apr. 2011.

\bibitem{Spitzweck10}
{\sc M.~Spitzweck}, {\em {Homotopy limits of model categories over inverse
  index categories}}, J. Pure Appl. Algebra, 214 (2010), pp.~769--777.

\bibitem{Stacks13}
{\sc T.~{Stacks Project Authors}}, {\em \itshape stacks project}.
\newblock \url{http://stacks.math.columbia.edu}, 2013.

\bibitem{Toen02c}
{\sc B.~To\"{e}n and G.~Vezzosi}, {\em {Segal topoi and stacks over Segal
  categories}}, ArXiv e-prints,  (2002), math.AG/0212330.

\bibitem{Toen05a}
{\sc B.~To\"{e}n and G.~{Vezzosi}}, {\em {Homotopical Algebraic Geometry II:
  Geometric stacks and applications}}, ArXiv e-prints,  (2004),
  math.AG/0404373.

\end{thebibliography}

\end{document}